\documentclass[a4paper,reqno,12pt]{amsart}

\usepackage[T1]{fontenc}
\usepackage{amsthm}
\usepackage{amsmath,amssymb}
\usepackage{url}
\usepackage[mathscr]{euscript}
\usepackage{newtxtext}
\usepackage{newtxmath}
\usepackage{xcolor}
\usepackage[hidelinks]{hyperref}

\usepackage{fullpage}
\usepackage{setspace}
\usepackage{mathtools}
\mathtoolsset{showonlyrefs}

\def\today{\ifcase\month\or
  January\or February\or March\or April\or May\or June\or
  July\or August\or September\or October\or November\or December\fi
  \space\number\day, \number\year}

\newtheorem{theorem}{Theorem}

\newtheorem{corollary}[theorem]{Corollary}

\newcommand{\D}{\mathscr{D}}
\newcommand{\E}{\mathbb{E}}
\newcommand{\N}{\mathscr{N}}
\renewcommand{\r}{\mathbb{R}}
\newcommand{\z}{\mathbb{Z}}
\newcommand{\1}{\boldsymbol{1}}
\newcommand{\p}{\varphi}
\newcommand{\maj}{\prec_{\text{maj}}}

\begin{document}

\title[A component-wise inequality for permutation matches]{A component-wise inequality for permutation matches}
\author[Gon\c{c}alves]{Felipe Gon\c{c}alves}
\date{\today}
\subjclass[2020]{60E15, 26D15, 05A20}
\keywords{majorization, Schur convexity, matching positions, multinomial distribution, rearrangements}
\address{IMPA - Instituto de Matemática Pura e Aplicada, Rio de Janeiro, 22460-320, Brazil.}
\email{goncalves@impa.br}

\allowdisplaybreaks

\begin{abstract}
Motivated by the recent paper [Sharp endpoint extension inequalities for the moment curve on finite fields II: an extremal property of the uniform distribution, arXiv:2609.29882], {which} proves sharp extension inequalities in finite fields via a two-point symmetrization argument, we prove here a more general component-wise inequality for permutation matches that implies theirs. 
\end{abstract}

\maketitle

\section{Introduction}

Our motivation comes from the extremal problem considered in \cite{BCF}. The authors show that the uniform distribution maximizes the expected number of distinct rearrangements of an independent sample. We recover this result from a more general inequality. The proof is inspired by the technique in the {recent paper} \cite{GMR} of the author.

Given two vectors $a,b\in\r^q$, we write $a\maj b$ if
\[
\sum_{j=1}^\ell a_j^\downarrow\leq\sum_{j=1}^\ell b_j^\downarrow
\]
for all $\ell=1,\ldots,q$, with equality when $\ell=q$, where $\downarrow$ denotes decreasing rearrangement. For {this notion} we refer to \cite{MOA}. For integers $d,q\geq 1$ define $\N_{d,q}:=\{n\in\z_{\geq0}^q:n_1+\cdots+n_q=d\}$. For $n \in\N_{d,q}$ let
\[
t(n)=(\underbrace{1,\ldots,1}_{n_1},\underbrace{2,\ldots,2}_{n_2},\ldots,\underbrace{q,\ldots,q}_{n_q})
\]
be the canonical word with multiplicity vector $n$. Define the number of matching positions and its vector {indexed by permutations} by
\begin{equation}\label{eq:matches}
D_n(\pi)=\#\{1\leq i\leq d:t_i(n)=t_{\pi(i)}(n)\}
\quad\text{and}\quad
\D_n=(D_n(\pi))_{\pi\in S_d}.
\end{equation}

The following is the main result of this paper.

\begin{theorem}\label{thm:main}
Let $n,m\in\N_{d,q}$ satisfy $n\maj m$. Then, component-wise, we have
\begin{equation*}%\label{eq:sorted}
\D_n^\downarrow\leq\D_m^\downarrow.
\end{equation*}
\end{theorem}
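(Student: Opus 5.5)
The plan is to reduce the theorem to a single elementary transfer and then prove that case by a two-point symmetrization on permutations, in the spirit of \cite{GMR}.

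\textbf{Step 1: Reformulation.} The statement $\D_n^\downarrow\leq\D_m^\downarrow$ component-wise says, for every integer $k$,
\[
\#\{\pi\in S_d: D_n(\pi)\geq k\}\leq\#\{\pi\in S_d:D_m(\pi)\geq k\}.
\]
Equivalently, $D_m(\pi)$ stochastically dominates $D_n(\pi)$ when $\pi$ is uniform on $S_d$. Two observations make this flexible.
\begin{itemize}
\item The relation is transitive.
\item The distribution of $D_n(\pi)$ depends only on $n$. Indeed, $D_n$ can be computed from any word $w$ with multiplicity vector $n$, since replacing $t(n)$ by $t(n)\circ\sigma$ replaces $\pi$ by $\sigma^{-1}\pi\sigma$. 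The order of the coordinates of $n$ is also irrelevant.
\end{itemize}

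\textbf{Step 2: Reduction to a unit transfer.} For integer vectors, $n\maj m$ means $n$ is reached from $m$ by finitely many moves of the form $m\mapsto m-e_i+e_j$ with $m_i\geq m_j+2$. This is the standard discrete Muirhead/Robin Hood lemma \cite{MOA}: repeatedly move a unit from the largest coordinate where $m$ exceeds $n$ to a coordinate where $m$ falls short. Each intermediate vector stays in $\N_{d,q}$, so by transitivity it suffices to treat $n=m-e_i+e_j$ with $m_i\geq m_j+2$.

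\textbf{Step 3: Comparing the two words.} Choose words $w'$ (for $m$) and $w$ (for $n$) that agree except at one position $p$, where $w_p=j$ and $w'_p=i$. Let $A$ be the positions $\neq p$ carrying letter $i$ and $B$ those $\neq p$ carrying $j$, so $|A|=m_i-1>|B|=m_j$. Only positions $p$ and $\pi^{-1}(p)$ can change their match status, which gives
\[
D_{w'}(\pi)-D_w(\pi)=\1_A(\pi(p))-\1_B(\pi(p))+\1_A(\pi^{-1}(p))-\1_B(\pi^{-1}(p)).
\]
Note that $\pi(p)$ and $\pi^{-1}(p)$ cannot both be $p$ unless both terms vanish.

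\textbf{Step 4: Symmetrization.} Fix an injection $\sigma:B\to A$ and let $\tau$ be the involution swapping each $b\in B$ with $\sigma(b)$ and fixing everything else. Group $S_d$ into the orbits $\{\pi,\tau\pi\tau\}$; the goal is to show orbit by orbit that the sorted pair of $D_w$-values is dominated by the sorted pair of $D_{w'}$-values. The tool is the identity $D_w(\tau\pi\tau)=D_{w\circ\tau}(\pi)$, where $w\circ\tau$ is $w$ with the letters $i,j$ swapped on $B\cup\sigma(B)$. Matches away from $p$, $\pi(p)$ and $\pi^{-1}(p)$ are then governed by the same $\{i,j\}$-pattern in both members of the orbit. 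This should leave a finite case analysis according to whether $\pi(p)$ and $\pi^{-1}(p)$ lie in $B$, in $\sigma(B)$, in $A\setminus\sigma(B)$, equal $p$, or lie elsewhere.

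I expect this orbit-wise comparison to be the main obstacle. If pairs do not suffice, I would use a larger group, conjugating by all permutations of $A\cup B\cup\{p\}$ that preserve the letter classes. The counts would then be compared within each coset, where the distribution of $(\pi(p),\pi^{-1}(p))$ is explicit. The inequality $|A|\geq|B|+1$ should make the events "landing in $A$" dominate the events "landing in $B$" uniformly in the conditioning. Summing over orbits then gives the threshold inequalities of Step 1 for this transfer, and Step 2 finishes the proof.
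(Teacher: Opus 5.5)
Your Steps 1--3 are sound and match the paper's own reduction. The paper also passes to threshold counts and to the single move $m=(r_1+1,r_2,\dots)$, $n=(r_1,r_2+1,\dots)$, and your formula for $D_{w'}(\pi)-D_w(\pi)$ is correct. The gap is Step 4, which carries all the content, and the orbit-wise comparison you propose there is false.

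\textbf{A counterexample to the pair comparison.} Take $d=4$, $m=(3,1)$, $n=(2,2)$, $w'=(1,1,1,2)$, $w=(2,1,1,2)$ and $p=1$, so that $A=\{2,3\}$ and $B=\{4\}$. Let $\pi=(1\,4)(2\,3)$. For either choice of $\sigma$, so $\tau=(2\,4)$ or $\tau=(3\,4)$, the orbit $\{\pi,\tau\pi\tau\}$ has $D_w$-values $\{4,0\}$ and $D_{w'}$-values $\{2,2\}$. The threshold inequality therefore fails at $k=4$ on this orbit. The error is your claim that matches away from $p,\pi(p),\pi^{-1}(p)$ are governed by the same pattern in both members of the orbit. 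Conjugating by $\tau$ recolours $B\cup\sigma(B)$, and this changes matches all along the cycles through those positions.

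\textbf{Why no larger group helps.} Any group acting by conjugation has orbits inside $S_d$-conjugacy classes. On the class of double transpositions in $S_4$, $D_n$ takes the values $\{4,0,0\}$ while $D_m$ takes $\{2,2,2\}$, for any choice of words with these multiplicities. Hence every partition of this class into conjugation orbits contains an orbit violating the inequality at $k=4$. The theorem holds here only because other classes (transpositions, $3$-cycles) compensate. Your heuristic that landing in $A$ dominates landing in $B$ gives at best $\E[D_{w'}-D_w\mid\text{orbit}]\geq0$, which compares means only. The negative increments concentrate exactly on the permutations with many matches.

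\textbf{What the paper does instead.} Its proof needs a decomposition that mixes cycle types, plus a global input that your plan lacks.
\begin{itemize}
\item Every $\pi\in S_d$ is written uniquely as one of $d$ extensions $\eta_{i_0}$ of some $\eta\in S_{d-1}$. The extension inserts the changed position into a cycle of $\eta$, or adds it as a fixed point. This is not a conjugation action.
\item For each $\eta$, the number of extensions whose match count changes by $+1$, $0$ or $-1$ is explicit in $D_r(\eta)$, $D_{r,1}(\eta)$ and $D_{r,2}(\eta)$.
\item Even so, a single $\eta$ can contribute negatively, for instance when $D_r(\eta)=k+1$ and $D_{r,2}(\eta)>D_{r,1}(\eta)$. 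So the argument needs the level-set inequality \eqref{eq:level-bounds}.
\item That inequality is proved by expressing the level sums through the multiset derangement count $B(z)$. It then uses the monotonicity of $B$ under balancing (Bang's theorem) and a ratio inequality for factorials.
\end{itemize}
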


\section{Applications}
We now describe two direct applications. 

For a given function $\p:[0,1]\to \r$ and $n\in\N_{d,q}$ we define
\begin{equation}\label{eq:Aphi}
A_\p(n)=\frac1{d!}\sum_{\pi\in S_d}\p\left(\frac{D_n(\pi)}d\right).
\end{equation}

\begin{corollary}\label{cor:family}
Let $\theta$ be a probability distribution on $X=\{1,2,\ldots,q\}$ and  $\theta_0$ be the uniform distribution on $X$. Let $T=(T_1,\ldots,T_d)$ be an independent sample with common distribution $\theta$, where $d\geq2$. Write $n(T) \in \N_{d,q}$ for its multiplicity vector. Let $\p:[0,1]\to I$ and  $g:I\to\r$ be nonincreasing, where $I\subset \r$ is an interval. Then
\[
\E_{\theta_0}g(A_\p(n(T))) \leq \E_\theta g(A_\p(n(T))).
\]
If $g$ is strictly decreasing and $\p(1-2/d)>\p(1)$, equality holds if and only if $\theta=\theta_0$.
\end{corollary}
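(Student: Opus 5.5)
The plan is to deduce the corollary from Theorem~\ref{thm:main} in two stages. First, Theorem~\ref{thm:main} gives Schur-concavity of $A_\p$ on $\N_{d,q}$. Second, a standard Schur-convexity transfer for multinomial expectations handles the probabilistic part.

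\emph{Stage 1.} Since $A_\p(n)$ is an average over all $\pi\in S_d$, it depends only on the multiset of entries of $\D_n$. Hence $A_\p(n)=\frac1{d!}\sum_{k}\p\big((\D_n^\downarrow)_k/d\big)$. If $n\maj m$, Theorem~\ref{thm:main} gives $(\D_n^\downarrow)_k\le(\D_m^\downarrow)_k$ for every $k$. Because $\p$ is nonincreasing, $A_\p(n)\ge A_\p(m)$. As $g$ is nonincreasing, $F:=g\circ A_\p$ satisfies $F(n)\le F(m)$ whenever $n\maj m$. Also $F$ is symmetric under permuting coordinates, since relabeling letters does not change the multiset $\D_n$.

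\emph{Stage 2.} Set $\Phi(\theta)=\E_\theta F(n(T))=\sum_{n\in\N_{d,q}}\binom{d}{n}\theta^nF(n)$. This is symmetric in $\theta$. Since every probability vector majorizes $\theta_0$, and $\theta_0$ can be reached from $\theta$ by finitely many $T$-transforms (averaging two coordinates), it suffices to show the following. If we fix $\theta_3,\dots,\theta_q$ and $\theta_1+\theta_2=c$, then $\Phi$ is nondecreasing in $|\theta_1-\theta_2|$.

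To see this, condition on $(n_3,\dots,n_q)$ and on $s=n_1+n_2$. Then $n_1\sim\mathrm{Bin}(s,p)$ with $p=\theta_1/c$, and the conditional expectation is $h(p)=\sum_k\binom sk p^k(1-p)^{s-k}f(k)$, where $f(k)=F(k,s-k,n_3,\dots)$. By Stage 1, $f$ is symmetric about $s/2$ and nondecreasing in $|k-s/2|$. Writing $f$ as its central value plus nonnegative increments, we get
\[
h(p)=f(\lfloor s/2\rfloor)+\sum_{r}\Delta_r\,\Pr\big(|K-s/2|\ge r\big),\qquad \Delta_r\ge0 .
\]
So the claim reduces to this: the two-sided tail probabilities $\Pr_p(|K-s/2|\ge r)$ of a binomial are nondecreasing in $|p-1/2|$. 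This is the step I expect to be the main technical point.

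I would prove that step by differentiating in $p$. Pairing the terms $k$ and $s-k$, the derivative of $p^kq^{s-k}+p^{s-k}q^k$ summed over the tail telescopes to a single boundary term. That term has the sign of $p^{a}q^{b}-p^{b}q^{a}$ with $a<b$, which is $\ge0$ for $p\ge 1/2$. This is essentially Rinott's (Marshall--Olkin) theorem, which I could alternatively cite from \cite{MOA}. Averaging over the conditioning gives the inequality.

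\emph{Equality.} It remains to show that if $\theta\ne\theta_0$, some $T$-transform step is strict. Take two coordinates with $\theta_1\neq\theta_2$. The conditional event with $s=2$ has positive probability, because some $\theta_i>0$ and $d\ge2$. For $s=2$, compare $k=1$ with $k\in\{0,2\}$. Replacing a pattern $(1,1)$ by $(2,0)$ in two letters changes $\D$ as follows: the transposition of those two positions goes from $D=d-2$ to $D=d$, and all other values move weakly upward. Since $\p(1-2/d)>\p(1)$, this gives $A_\p$ strictly smaller for the less uniform vector. Since $g$ is strictly decreasing, $f(0)=f(2)>f(1)$, so $\Delta>0$. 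The binomial tail $\Pr(|K-1|\ge1)=p^2+q^2$ is strictly increasing in $|p-1/2|$, so the inequality is strict.

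Some care is needed when $\theta$ has zero entries. Then the chain of $T$-transforms should be chosen so that each step keeps the relevant conditional event of positive probability.
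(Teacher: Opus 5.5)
The inequality part of your argument is correct, and it follows a different route from the paper. The paper never goes through Schur-convexity in $\theta$. It performs one averaging step $\theta\mapsto\bar\theta$ on two coordinates. It then writes $\E_\theta g(A_\p(n(T)))-\E_{\bar\theta}g(A_\p(n(T)))$, block by block in the remaining multiplicities, as a Chebyshev-type sum $\sum_{i,j}\binom ki\binom kj(f_i-f_j)(w_i-w_j)$, where $w_j=\theta_1^j\theta_2^{k-j}+\theta_1^{k-j}\theta_2^j$. It observes that $f_j$ and $w_j$ are both nonincreasing for $0\le j\le k/2$, and finishes by iterated averaging and continuity.

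You instead turn Theorem~\ref{thm:main} into Schur-convexity of $F=g\circ A_\p$ on $\N_{d,q}$. You then prove Rinott's transfer principle by hand (conditioning, layer-cake decomposition, binomial tail monotonicity) and use finitely many $T$-transforms. This avoids the limiting argument and proves more: $\theta\mapsto\E_\theta g(A_\p(n(T)))$ is Schur-convex, so the inequality holds for every $\theta'\maj\theta$, not only $\theta'=\theta_0$. The paper's Chebyshev identity, in exchange, handles the two-point step directly without any tail lemma. Your pointwise-in-$\pi$ proof that $A_\p$ changes strictly is also fine, and it does not need Theorem~\ref{thm:main}.

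Two points need repair.

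\textbf{Sign slip in the tail step.} With $b=s-1-a>a$, the boundary term is $s\binom{s-1}{a}(p^{b}q^{a}-p^{a}q^{b})$, which is $\ge0$ for $p\ge1/2$. The expression $p^aq^b-p^bq^a$ you wrote is $\le0$ there. The conclusion you draw is still true.

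\textbf{The equality case fails as written.} You have $\Pr(s=2)=\binom d2c^2(1-c)^{d-2}$ with $c=\theta_1+\theta_2$, and this vanishes when $c=1$ and $d\ge3$. For instance, if $q=2$ and $d\ge3$, then $s=d$ almost surely, so your strictness argument gives nothing. Positivity of some $\theta_i$ together with $d\ge2$ is not the right condition.

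The fix is the block the paper uses: $n_3=\cdots=n_q=0$, i.e.\ $s=d$, which has probability $c^d>0$. There you compare $f(0)$ with $f(1)$, that is, $(0,d,0,\ldots,0)$ with $(1,d-1,0,\ldots,0)$. Your own argument works for any $s\ge2$:
\begin{itemize}
\item turning the unique letter $1$ into a $2$ only creates matches;
\item the transposition of that position with a $2$-position goes from $d-2$ to $d$ matches, so $\Delta_{s/2}=f(0)-f(1)>0$;
\item $\Pr_p(|K-s/2|\ge s/2)=p^s+(1-p)^s$ is strictly increasing in $|p-1/2|$.
\end{itemize}
Hence every nontrivial $T$-transform step is strict, and your closing worry about zero entries is unnecessary.
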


\begin{proof}
Write $\theta_j$ for the probability of letter $j$. For nonuniform $\theta$, relabel so that $\theta_1>\theta_2$, and let $\bar\theta$ replace these two probabilities by their average. For each vector $r=(r_3,\ldots,r_q)$ of nonnegative integers with $r_3+\cdots+r_q\leq d$, put $k=d-r_3-\cdots-r_q$ and define, for $0\leq j\leq k$,
\[
f_j=g(A_\p(j,k-j,r))
\quad\text{and}\quad
w_j=\theta_1^j\theta_2^{k-j}+\theta_1^{k-j}\theta_2^j.
\]
Split $\N_{d,q}$ into $n_1=0$ and $n_1>0$. The substitutions $(0,n_2)\mapsto(n_2,0)$ and $(n_1,n_2)\mapsto(n_1-1,n_2+1)$ reindex these parts as $n_2=0$ and $n_2>0$, respectively, and give
\begin{align*}
\E_\theta g(A_\p(n(T)))&=d!\sum_r\frac1{k!}\prod_{\ell=3}^q\frac{\theta_\ell^{r_\ell}}{r_\ell!}
\left[f_0\theta_2^k+\sum_{j=1}^k\binom kj f_j\theta_1^j\theta_2^{k-j}\right]\\
& =d!\sum_r\frac1{k!}\prod_{\ell=3}^q\frac{\theta_\ell^{r_\ell}}{r_\ell!}
\left[f_k\theta_2^k+\sum_{j=0}^{k-1}\binom{k}{j+1}f_{j+1}\theta_1^{j+1}\theta_2^{k-j-1}\right]\\
&=\frac{d!}{2}\sum_r\frac1{k!}\prod_{\ell=3}^q\frac{\theta_\ell^{r_\ell}}{r_\ell!}
\sum_{j=0}^k\binom kj f_jw_j.
\end{align*}
In the last equality, we reverse the reindexed sum, use $f_j=f_{k-j}$, and average with the original sum. Applying this formula to $\theta$ and $\bar\theta$ gives
\begin{align*}
\E_\theta g(A_\p(n(T)))-\E_{\bar\theta}g(A_\p(n(T)))
&=\frac{d!}{2}\sum_r\frac1{k!}\prod_{\ell=3}^q\frac{\theta_\ell^{r_\ell}}{r_\ell!}
\sum_{j=0}^k\binom kj f_j\bigl(w_j-2\bar\theta_1^k\bigr)\\
&=d!\sum_r\frac1{2^{k+2}k!}\prod_{\ell=3}^q\frac{\theta_\ell^{r_\ell}}{r_\ell!}
\sum_{i,j=0}^k\binom ki\binom kj(f_i-f_j)(w_i-w_j)\\ & \geq0.
\end{align*}
By symmetry, the last sum can be restricted to $0\leq i,j\leq k/2$, counting each index below $k/2$ twice. On this range, Theorem \ref{thm:main} and the monotonicity of $\p$ and $g$ make $f_j$ nonincreasing, whereas $2\theta_1\theta_2\leq\theta_1^2+\theta_2^2$ makes $w_j$ nonincreasing. This proves the last inequality. Successive averaging and continuity give the desired comparison. If $g$ is strictly decreasing and $\p(1-2/d)>\p(1)$, then $f_1<f_0$ when $r=0$ and $k=d$, while $w_0>w_1$ whenever $\theta_1>\theta_2$. Thus the comparison is strict for every nonuniform distribution.
\end{proof}

In particular, we recover a main result of {\cite{BCF}}. 

\begin{corollary}[{\cite[Theorem 3]{BCF}}]\label{cor:BCF}
With $\theta,\theta_0$ and $T$ as in Corollary \ref{cor:family}, let $M(T)$ be the number of distinct rearrangements of $T$. Then
\[
\E_\theta M(T)\leq\E_{\theta_0}M(T),
\]
with equality if and only if $\theta=\theta_0$.
\end{corollary}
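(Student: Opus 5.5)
The plan is to write $M(T)$ as $g(A_\p(n(T)))$ for a suitable nonincreasing $\p$ and $g$, and then apply Corollary \ref{cor:family}. If $T$ has multiplicity vector $n=n(T)$, then $M(T)=d!/\prod_j n_j!$. The permutations $\pi\in S_d$ with $D_n(\pi)=d$ are exactly those that preserve the canonical word $t(n)$, and there are $\prod_j n_j!$ of them. So if $\p(x)=-\boldsymbol 1_{\{x=1\}}$, then
\[
A_\p(n)=-\frac{\prod_j n_j!}{d!}=-\frac1{M(T)}\in[-1,-1/d!].
\]
This $\p$ maps $[0,1]$ into $I=[-1,0]$ and is nonincreasing. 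It also satisfies $\p(1-2/d)=0>-1=\p(1)$.

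Next I choose $g$. On $[-1,0)$ the map $y\mapsto 1/y$ is decreasing, but it blows up at $0$. Since $A_\p$ only takes values in $[-1,-1/d!]$, I truncate it: set $g(y)=\max(1/y,-d!)$ for $y\in[-1,0)$ and $g(0)=-d!$. This $g$ is nonincreasing on $I$, and on the range of $A_\p$ it gives $g(A_\p(n))=-M$ exactly. Corollary \ref{cor:family} then yields $\E_{\theta_0}(-M(T))\le\E_\theta(-M(T))$, which is the desired inequality $\E_\theta M(T)\leq\E_{\theta_0}M(T)$.

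The main obstacle is the equality case, because the truncated $g$ is not strictly decreasing and so the last sentence of Corollary \ref{cor:family} does not apply directly. I will instead rerun the strictness argument from its proof, which only needs $f_1<f_0$ for $r=0$, $k=d$, together with $w_0>w_1$ whenever $\theta_1>\theta_2$. Here $f_j=-M$ evaluated at $n=(j,d-j,0,\ldots,0)$. So $f_0=-1$ (the word $1\cdots1$ or $2\cdots2$ has one rearrangement) and $f_1=-d$. Since $d\geq2$, we get $f_1<f_0$. In the symmetrization identity every term is nonnegative, so this one strictly positive term gives $\E_\theta(-M)>\E_{\bar\theta}(-M)$ for every nonuniform $\theta$. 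Chaining with the non-strict inequality $\E_{\bar\theta}(-M)\ge \E_{\theta_0}(-M)$ from Corollary \ref{cor:family} (applied to $\bar\theta$) gives strict inequality whenever $\theta\neq\theta_0$. Hence equality holds if and only if $\theta=\theta_0$.
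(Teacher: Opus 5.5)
Your proof is correct and follows the paper's route: write $-M(T)=g(A_\p(n(T)))$ and invoke Corollary \ref{cor:family}. The paper instead takes $\p(x)=\min(1,\tfrac d2(1-x))$ with $g(x)=1/(x-1)$ and cites the equality clause directly, whereas your truncated $g$ (which also avoids the paper's $g$ being undefined at $1\in\p([0,1])$) is not strictly decreasing, so you rerun the strictness step by hand, and you do this correctly via $f_1=-d<-1=f_0$ and $w_0>w_1$.
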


\begin{proof}
Take $\p(x)=\min(1,\tfrac{d}2(1-x))$ and  $g(x)=1/(x-1)$. Since $D_n(\pi)=d$ for exactly $\prod_{j=1}^q n_j!$ permutations and $D_n(\pi)=d-1$ is impossible, we have
\[
A_\p(n)=1-\frac{\prod_{j=1}^q n_j!}{d!}
\quad\text{and}\quad
g(A_\p(n))=-\frac{d!}{\prod_{j=1}^q n_j!}.
\]
Thus $g(A_\p(n(T)))=-M(T)$, and Corollary \ref{cor:family} gives the assertion and its equality case.
\end{proof}

\section{Proof of Theorem \ref{thm:main}}

\begin{proof}
By definition, the theorem is equivalent to
\[
\#\{\pi\in S_d:D_n(\pi)\leq k\}
-\#\{\pi\in S_d:D_m(\pi)\leq k\}\geq0
\]
for every integer $k$. We begin with an identity for this difference. It suffices to consider the case
\begin{equation}\label{eq:move}
m=(r_1+1,r_2,r_3,\ldots,r_q)
\quad\text{and}\quad
n=(r_1,r_2+1,r_3,\ldots,r_q),
\end{equation}
where $r_1>r_2\geq 0$ and both vectors have multiplicities adding up to $d$. Put $r=(r_1,r_2,\ldots,r_q)$ and let $s=(1^{{r_1}},2^{{r_2}},3^{r_3},\ldots,q^{r_q})$ be the canonical word with multiplicities $r$. For $\eta\in S_{d-1}$ and $1\leq\ell\leq q$, define
\[
D_r(\eta)=\#\{1\leq i\leq {d-1}:s_i=s_{\eta(i)}\}
\quad\text{and}\quad
D_{r,\ell}(\eta)=\#\{1\leq i\leq {d-1}:s_i=s_{\eta(i)}=\ell\},
\]
so that $D_r(\eta) = \sum_{\ell =1}^q D_{r,\ell}(\eta)$.
The canonical words $t(m)$ and $t(n)$ differ only at position ${r_1}+1$, which carries $1$ in $t(m)$ and $2$ in $t(n)$. Deleting this position from either word gives $s$. Thus we insert a letter $\ell\in\{1,2\}$ at position ${r_1}+1$ in $s$, obtaining $t(m)$ for $\ell=1$ and $t(n)$ for $\ell=2$. Fix $\eta\in S_{d-1}$, and shift its position labels and their images to the right after $r_1$. For each of the $d-1$ old positions, we obtain an extension by sending that position to ${r_1}+1$ and sending ${r_1}+1$ to its former image, while leaving every other image unchanged. One further extension fixes ${r_1}+1$ and preserves the old bijection. This gives $d$ extensions of $\eta$. Explicitly, letting $\iota(j)=j+{\bf 1}_{j>{r_1}}$, define $\eta_{i_0}\in S_d$ by
\[
\eta_{i_0}(\iota(j))
=\begin{cases}
{r_1}+1,&j=i_0,\\
\iota(\eta(j)),&j\ne i_0,
\end{cases}
\ (j\leq d-1),
\quad 
\text{and} \quad \eta_{i_0}({r_1}+1) =\begin{cases}
\iota(\eta(i_0)), &i_0<d,\\
{r_1}+1,&i_0=d,
\end{cases}
\]
for $i_0=1,\ldots,d$.
Thus $i_0<d$ selects the old position $i_0$, now labelled $\iota(i_0)$, and $i_0=d$ gives the extension fixing ${r_1}+1$. Every permutation in $S_d$ arises like this exactly once. In particular, we derive the identity
\begin{align}
&\#\{\pi\in S_d:D_n(\pi)\leq k\}
-\#\{\pi\in S_d:D_m(\pi)\leq k\} \\
& = \sum_{\pi\in S_{d}}
(\1_{\{D_n(\pi)\leq k\}}
-\1_{\{D_m(\pi)\leq k\}}) \\
& = \sum_{k_0=k}^{k+1}
\sum_{\substack{\eta\in S_{d-1}\\D_r(\eta)=k_0}}
\sum_{i_0=1}^d
(\1_{\{D_n(\eta_{i_0})\leq k\}}
-\1_{\{D_m(\eta_{i_0})\leq k\}}),
\end{align}
where the sum only has terms for $k_0\in \{k,k+1\}$ because if $D_r(\eta)\leq k-1$, both indicators equal \(1\) for every extension in the counts above for $n$ and $m$, while if $D_r(\eta)\geq k+2$, all extensions satisfy $D_n(\eta_{i_0})\geq  k+1$ and $D_m(\eta_{i_0})\geq  k+1$. 

{For the inserted letter $\ell \in \{1,2\}$, the extension associated with position $i=i_0<d$ removes the comparison between $s_i$ and $s_{\eta(i)}$ and compares each of these letters with $\ell$ instead. Exactly $r_\ell$ choices of $i$ have $s_i=\ell$, and exactly $r_\ell$ have $s_{\eta(i)}=\ell$, since $\eta$ is a permutation. Their intersection consists of the $D_{r,\ell}(\eta)$ choices with both letters equal to $\ell$. Thus the old positions split into the following five cases:
\[
\begin{array}{lcc}
\text{Letters }s_i,s_{\eta(i)}&\text{Number of positions }i&\text{Change in matches}\\ \hline
(\ell,\ell)&D_{r,\ell}(\eta)&+1\\
(\ell,\text{a different letter})&r_\ell-D_{r,\ell}(\eta)&+1\\
(\text{a different letter},\ell)&r_\ell-D_{r,\ell}(\eta)&+1\\
\text{equal letters, neither }\ell&D_r(\eta)-D_{r,\ell}(\eta)&-1\\
\text{different letters, neither }\ell&\text{remaining positions}&0
\end{array}
\]
In the first case one match becomes two, in the next two cases one match is created and in the fourth case one match becomes none.}

The extension fixing the new position contributes one more match, so the number of extensions with change $+1$ is
\[
D_{r,\ell}(\eta)+2\bigl(r_\ell-D_{r,\ell}(\eta)\bigr)+1
=1+2r_\ell-D_{r,\ell}(\eta).
\]
The number with change $-1$ is $D_r(\eta)-D_{r,\ell}(\eta)$. Subtracting these two counts from the total of $d$ extensions gives the number with change $0$
\[
d-\bigl(1+2r_\ell-D_{r,\ell}(\eta)\bigr)
-\bigl(D_r(\eta)-D_{r,\ell}(\eta)\bigr)=d-1-D_r(\eta)-2r_\ell+2D_{r,\ell}(\eta).
\]
Finally, if $D_r(\eta)=k$, all extensions except those with change $+1$ give at most $k$ matches. Subtracting the count for $\ell=1$ from that for $\ell=2$ gives $2(r_1-r_2)-D_{r,1}(\eta)+D_{r,2}(\eta)$. If $D_r(\eta)=k+1$, only extensions with change $-1$ qualify, giving the difference $D_{r,1}(\eta)-D_{r,2}(\eta)$. This proves
\begin{equation*}
\begin{aligned}
\#\{\pi\in S_d:D_n(\pi)\leq k\}
-\#\{\pi\in S_d:D_m(\pi)\leq k\}
& =\sum_{\substack{\eta\in S_{d-1}\\D_r(\eta)=k}}
\bigl(2(r_1-r_2)-D_{r,1}(\eta)+D_{r,2}(\eta)\bigr)\\
&\quad\phantom{=}{}+\sum_{\substack{\eta\in S_{d-1}\\D_r(\eta)=k+1}}
\bigl(D_{r,1}(\eta)-D_{r,2}(\eta)\bigr).
\end{aligned}
\end{equation*}
It therefore suffices to prove that, for every integer $k$,
\begin{equation}\label{eq:level-bounds}
0\leq\sum_{D_r(\eta)=k}\bigl({D_{r,1}(\eta)-D_{r,2}(\eta)}\bigr)
\leq{(r_1-r_2)}\,\#\{\eta\in S_{d-1}:D_r(\eta)=k\}.
\end{equation}

To estimate these sums, for $z\in\z_{\geq0}^q$ write $|z|=\sum_{\ell=1}^qz_\ell$ and define
\[
B(z)=
\sum_{\substack{C=(c_{ij})\in\z_{\geq0}^{q\times q},\ c_{ii}=0\\
\sum_jc_{ij}=\sum_jc_{ji}=z_i\text{ for all }i}}
\frac1{\prod_{i\ne j}c_{ij}!}.
\]
Set $B(z)=0$ if any coordinate is negative, and use the convention $1/t!=0$ for negative integers $t$. The function $B$ is symmetric and satisfies $B(x)\geq B(y)$ whenever $x\maj y$. Indeed, $(\prod_{\ell=1}^qz_\ell!)B(z)$ counts the distinct rearrangements of a word with multiplicities $z$ that differ from it at every position. This count increases under balancing by \cite[Theorem 4.3]{Bang}, while the factorial product decreases.

The table $c_{ij}=\#\{v:s_v=i,\ s_{\eta(v)}=j\}$ is realized by $\prod_{\ell=1}^q(r_\ell!)^2/\prod_{i,j}c_{ij}!$ permutations. {We group these permutations by the number of matches of each letter. For a fixed vector $y\in\z_{\geq0}^q$, prescribing $D_{r,\ell}(\eta)=y_\ell$ fixes the diagonal entries $c_{\ell\ell}=y_\ell$. Removing the diagonal leaves row and column sums $r-y$, so summing over the remaining entries gives $B(r-y)$. Hence
\[
\#\{\eta\in S_{d-1}:D_{r,\ell}(\eta)=y_\ell\text{ for all }\ell\}
=\frac{\prod_{\ell=1}^q(r_\ell!)^2}{\prod_{\ell=1}^qy_\ell!}\,B(r-y).
\]
The condition $D_r(\eta)=k$ becomes $|y|=k$, and the weight $D_{r,1}(\eta)-D_{r,2}(\eta)$ becomes $y_1-y_2$. Multiplying the count by this weight and summing over these diagonals gives}
\[
{\displaystyle\sum_{D_r(\eta)=k}\bigl({D_{r,1}(\eta)-D_{r,2}(\eta)}\bigr)}
=
{\prod_{\ell=1}^q(r_\ell!)^2}\sum_{\substack{y\in\z_{\geq0}^q\\|y|=k}}
\frac{(y_1-y_2)B(r-y)}{\prod_{\ell=1}^qy_\ell!}.
\]
Pair $(y_1,y_2)$ with $(y_2,y_1)$. If $y_1>y_2$, then
\[
B({r_1}-y_1,{r_2}-y_2,r_3-y_3,\ldots,r_q-y_q)
\geq
B({r_1}-y_2,{r_2}-y_1,r_3-y_3,\ldots,r_q-y_q),
\]
since the first pair is more balanced.  Thus every paired contribution is nonnegative, proving the first inequality in \eqref{eq:level-bounds}.

{For the second inequality, use the same count and set $z=r-y$. Thus $z_\ell$ counts the positions carrying letter $\ell$ that do not match their images. When $|y|=k$, we have
\[
|z|=d-1-k
\quad\text{and}\quad
r_1-r_2-y_1+y_2=z_1-z_2.
\]
The weight is now $z_1-z_2$, while $B(r-y)$ becomes $B(z)$ and $y_\ell!$ becomes $(r_\ell-z_\ell)!$. Substituting in the same count gives}
\[
{\displaystyle\sum_{D_r(\eta)=k}
\bigl(r_1-r_2-{D_{r,1}(\eta)}+{D_{r,2}(\eta)}\bigr)}
=
{\prod_{\ell=1}^q(r_\ell!)^2}\sum_{\substack{z\in\z_{\geq0}^q\\|z|={d-1}-k}}
\frac{(z_1-z_2)B(z)}{\prod_{\ell=1}^q(r_\ell-z_\ell)!}.
\]
Pair $(z_1,z_2)$ with $(z_2,z_1)$. For integers $z_1>z_2\geq0$,
\[
\frac1{({r_1}-z_1)!({r_2}-z_2)!}\geq\frac1{({r_1}-z_2)!({r_2}-z_1)!}.
\]
If the right side is positive, the ratio of the two sides is
$\prod_{\nu=z_2}^{z_1-1}({r_1}-\nu)/({r_2}-\nu)\geq1$; otherwise the inequality is immediate. Symmetry of $B$ now proves the second inequality in \eqref{eq:level-bounds}.
\end{proof}

\section*{AI Statement}
The author had a proof of Theorem~\ref{thm:main} with no related applications.
It arose from investigations related to \cite{GMR}, which are, in principle, unrelated to \cite{BCF} and its predecessor paper. A closer reading of
\cite[Theorem~3]{BCF}, however, suggested that the results were, in fact, related.
The author then interacted with Generative AI to find a relation, which, in joint effort, led to the corollaries in this paper and simplified parts of the original argument. All proofs were checked by the author, who takes full responsibility for the related content.

\section*{Acknowledgments}
F.G. acknowledges support from the following funding agencies: The Office of Naval Research GRANT14201749, The Serrapilheira Institute (Serra-2211-41824), FAPERJ (E-26/200.209/2023 and E-26/210.245/2024) and CNPq (309910/2023-4).

\end{document}